\documentclass[twoside,11pt]{article}

\usepackage{graphicx, subfigure}
\usepackage[top=1in,bottom=1in,left=1in,right=1in]{geometry}
\usepackage[sort&compress, numbers]{natbib} \setlength{\bibsep}{0.0pt}
\usepackage{amsfonts, amsmath, amssymb, amsthm, constants, bbm}
\usepackage{mathtools}
\mathtoolsset{showonlyrefs}
\usepackage{enumitem}


\usepackage{color}
\definecolor{darkred}{RGB}{100,0,0}
\definecolor{darkgreen}{RGB}{0,100,0}
\definecolor{darkblue}{RGB}{0,0,150}

\usepackage{hyperref}
\hypersetup{colorlinks=true, linkcolor=darkred, citecolor=darkgreen, urlcolor=darkblue}
\usepackage{url}


\def\hc{{\rm HC}}

\newtheorem{thm}{Theorem}
\newtheorem{prp}{Proposition}

\theoremstyle{remark}

\def\beq{\begin{equation}} 
\def\eeq{\end{equation}}
\def\beqn{\begin{eqnarray*}}
\def\eeqn{\end{eqnarray*}}
\def\Bitem{\begin{itemize}\setlength{\itemsep}{.2in}}
\def\bitem{\begin{itemize}\setlength{\itemsep}{.05in}}
\def\eitem{\end{itemize}}
\def\Benum{\begin{enumerate}\setlength{\itemsep}{.2in}}
\def\benum{\begin{enumerate}\setlength{\itemsep}{.05in}}
\def\eenum{\end{enumerate}}
\def\bmult{\begin{multline*}}
\def\emult{\end{multline*}}
\def\bcenter{\begin{center}}
\def\ecenter{\end{center}}
\def\bframe{\begin{frame}}
\def\eframe{\end{frame}}


\newcommand{\figref}[1]{Figure~\ref{fig:#1}}






\def\cH{\mathcal{H}}

\def\cN{\mathcal{N}}





\def\bbI{\mathbb{I}}

\newcommand{\E}{\operatorname{\mathbb{E}}}
\renewcommand{\P}{\operatorname{\mathbb{P}}}
\newcommand{\Var}{\operatorname{Var}}

\def\Bin{\text{Bin}}


\def\eps{\varepsilon}

\def\1{\mathbbm{1}}
\newcommand{\IND}[1]{\bbI\{ #1 \}}

\definecolor{purple}{rgb}{0.4,.1,.9}

\newcommand\blfootnote[1]{%
  \begingroup
  \renewcommand\thefootnote{}\footnote{#1}%
  \addtocounter{footnote}{-1}%
  \endgroup
}


\pagestyle{myheadings}
\raggedbottom

\begin{document}
\thispagestyle{empty}

\title{Detecting Sparse Heterogeneous Mixtures in a Two-Sample Problem}
\author{Rong Huang}
\date{}
\maketitle

\begin{abstract}
    We consider the problem of detecting sparse heterogeneous mixtures in a two-sample setting from a nonparametric perspective, where the effect manifests itself as a positive shift. We suggest a two-sample higher criticism test, and show that it is first-order comparable to the likelihood ratio test for the generalized Gaussian mixture models in all sparsity regimes. 
\end{abstract}

\blfootnote{The author is with the Department of Mathematics, University of California, San Diego (\url{math.ucsd.edu}).}

\section{Introduction}
The detection of sparse mixtures has been studied for decades \cite{ingster1997some,Jin:2004fj}. Most work has focused on detecting deviation of data from the known distribution. However, in practice, it's more common that we do not have access to the null distribution and have to estimate it from a control group. For example, in a clinical trial, patients were assigned to two groups randomly, given either the placebo or the treatment. For some reasons, the treatment could only affect a small proportion of the patients treated, while the remaining patients reacted the same as patients in the control group. Similar settings were investigated in Conover and Salsburg \cite{conover1988locally} and they modeled the shift in the distribution as a Lehmann alternative and focused on the locally most powerful tests.

We study this situation in parallel with the work of Ingster \cite{ingster1997some} and Donoho and Jin \cite{Jin:2004fj}. Let $F$ and $G$ be two continuous (unknown) distribution functions on the real line. We consider the following hypothesis testing problem: based on a random sample $X_1, \cdots, X_m$ drawn iid from $F$ and another independent random sample $Y_1, \cdots, Y_n$ drawn iid from $G$, decide
\beq\label{problem2sample}
\cH_0: G = F \quad \text{versus} \quad \cH_1: G = (1-\eps)F + \eps F(\cdot - \mu), \quad \eps > 0, \quad \mu > 0.
\eeq
where $\eps \in (0,1/2)$ is the fraction of non-null effect and $\mu$ is the size of the location shift. Hence, under the alternative, $G$ is stochastic larger than $F$. We assume that 
\beq \label{mn}
\lim_{m , n \to \infty} \frac{n}{m+n} = \eta \in (0,1/2]
\eeq 
at a sufficient fast rate.

The optimum of rank tests was mainly investigated as locally most powerful \cite{lehmann1953power}. Following the line of our work in the one-sample setting, we still focus on the asymptotically optimum here. As usual, a testing procedure is asymptotically powerful (resp. powerless) if the sum of its probabilities of Type I and Type II errors (its risk) has limit 0 (resp. inferior at least 1) in the large sample asymptote.

\subsection{A benchmark: generalized Gaussian mixture model}
The normal mixture model has been studied in Ingster \cite{ingster1997some} considering the one-sample setting, that is, $F$ is known to be standard normal and only the $Y$-sample is collected. The problem is investigated in various asymptotic regimes defined by how fast $\eps$ goes to zero. The detection boundary of the likelihood ratio test (LRT) (then any other tests) is derived.  Donoho and Jin \cite{Jin:2004fj} further derived the detection boundary when $F$ is generalized Gaussian: 
\beq
f(x) \propto \exp \Big(-\frac{|x|^\gamma}{\gamma}\Big),
\eeq 
where $\gamma > 0$. Note that $\gamma = 2$ corresponds to the normal distribution and $\gamma = 1$ corresponds to the double-exponential distribution.  They parameterized $\eps = \eps_n$ as 
\beq \label{eps2sample}
\eps_n = n^{-\beta}, \quad 0 < \beta < 1 \text{ fixed}.
\eeq
In the sparse setting where $1/2 < \beta < 1$, let 
\beq \label{mu}
\mu_n = (\gamma r \log n)^{1/\gamma}, \quad 0<r<1 \text{ fixed},
\eeq 
then the detection boundary when $\gamma > 1$ is
\beq 
\rho^*_\gamma (\beta) = \begin{cases} (2^{1/(\gamma-1)}-1)^{\gamma-1}(\beta - \frac{1}{2}), & \frac{1}{2} < \beta < 1 - 2 ^{-\gamma/(\gamma -1 )}; \\
(1-(1-\beta)^{1/\gamma})^\gamma, & 1 - 2 ^{-\gamma/(\gamma -1 )}< \beta <1.
\end{cases}
\eeq 
and for the case $\gamma \le 1$
\beq
\rho^*_\gamma (\beta) = 2 \beta - 1.
\eeq 
That means, if $r > \rho^*(\beta)$, $\cH_0$ and $\cH_1$ separate asymptotically, while if  $r < \rho^*(\beta)$, $\cH_0$ and $\cH_1$ merge asymptotically. 

The detection boundary in the dense regime where $0 < \beta < 1/2$ is given in \cite{AriasCastro:2016is}. Let
\beq \label{mu2}
\mu_n = n^{s-1/2}, \quad 0 < s < 1/2 \text{ fixed},
\eeq
then the hypotheses merge asymptotically when $s < \beta$ if $\gamma \ge 1/2$ and $s < \frac{1}{2} - \frac{1-2\beta}{1+ 2\gamma}$ if $\gamma < 1/2$.

\subsection{The two-sample higher criticism test}
The one-sample higher criticism test was suggested in Donoho and Jin \cite{Jin:2004fj} and proved to be first-order asymptotically comparable to the LRT in the normal mixture model. In the two-sample setting, an analogous higher criticism statistic was proposed as \cite{canner1975simulation, finner2018two}:
\beq \label{HC2sample}
\hc = \sup_{t \in \mathbb{R}} \sqrt{\frac{mn}{m+n}} \frac{[F_m(t) - G_n(t)]}{\sqrt{H_{m+n}(t)(1-H_{m+n}(t))}},
\eeq
where $F_m$ and $G_n$ are empirical distributions of the $X$-sample and the $Y$-sample, respectively, and $H_{m+n}(t) = \frac{1}{m+n} (m F_m(t) + n G_n(t))$ is the empirical distribution of the combined sample. The test rejects for large values of \eqref{HC2sample}. This is to the two-sample Kolmogorov - Smirnov test \cite{smirnov1939estimation} what the Anderson-Darling test (aka the higher criticism test) is to the Kolmogorov-Smirnov test. 

Pettitt \cite{pettitt1976two} proposed the integral version of the two-sample Anderson-Darling statistic and gave an approximation of the distribution. Finner and Gontscharuk \cite{finner2018two} studied the supremum version \eqref{HC2sample} in terms of local levels and focused on the Type I error.  It is also connected to the work of Zhao et al \cite{Zhao:9999ku}, which normalizes the Hoeffding test for independence in an analogous way.
Note that all these tests are based on ranks only, so they are nonparametric tests. Distribution-free tests for sparse heterogeneous mixtures in the one-sample setting was investigated in \cite{AriasCastro:2016is} where they assumed that $F$ is symmetric about zero and the true effects have positive median. In our work, we do not pose any assumptions on $F$ except it is continuous, and we use the $X$-sample to estimate $F$. In addition, the two-sample higher criticism is parallel to the CUSUM sign test in \cite{AriasCastro:2016is} as the Wilcoxon test to the Wilcoxon signed-rank test \cite{wilcoxon1945}. 

\section{Lower bound}
The formulation \eqref{problem2sample} indicates that both the null and the alternative hypotheses are composite. If we assume model parameters $(F, \eps, \mu)$ are known, then the likelihood ratio test (LRT) is the most powerful test by Neyman-Pearson lemma. In particular, if $F$ is given, we don't need the $X$-sample, then the question is reduced to the one-sample situation \cite{ingster1997some, Jin:2004fj, cai2014optimal}. The general detection boundary was given in \cite[Lemma A.1]{arias2018dist} as follows in our context: let $f$ denote the density of $F$, then the hypotheses \eqref{problem2sample} merge asymptotically when there is a sequence $(x_n)$ such that
\beq
n \bar  F (x_n) \to 0, \quad n \eps_n \bar F(x_n - \mu_n) \to 0,
\eeq
and
\beq
n \eps^2_n \Big [ \int_{-\infty}^{x_n} \frac{f(x-\mu_n)^2}{f(x)} dx - 1   \Big]_+ \to 0.
\eeq 

\section{The two-sample higher criticism test}
For a distribution $F$, $\bar F(x) = 1 - F(x)$ will denote its survival function. 
\begin{thm}
For the testing problem \eqref{problem2sample} and under \eqref{mn}, the two-sample higher criticism test is asymptotically powerful if either there is a sequence $(t_n)$ such that $t_n \to \infty$, 
\beq \label{condHC1}
n(\bar F(t_n) \vee \eps \bar F(t_n-\mu)) \gg \log^2 n,
\eeq 
and 
\beq \label{condHC2}
\frac{ \sqrt{n} \eps [\bar F(t_n - \mu) - \bar F(t_n)]}{\sqrt{\bar F(t_n) + \eps \eta \bar F(t_n-\mu)}} \gg \log n;
\eeq
or $t$ is the median of $F$ and 
\beq \label{condHC3}
 \sqrt{n} \eps [\bar F(t - \mu) - \frac{1}{2}] \gg \log n.
\eeq 
\end{thm}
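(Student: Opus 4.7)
The plan is to lower-bound $\hc$ by its value at a single well-chosen threshold---the sequence $(t_n)$ of case (a) or the median of $F$ in case (b)---and pair this with a crude bound on the supremum under $\cH_0$. Since $\hc \ge \hc(t)$ for every fixed $t$, it suffices to show that the statistic evaluated at the chosen threshold is $\gg \log n$ under $\cH_1$ with probability tending to $1$, while $\hc$ itself is $O_P(\log n)$ under $\cH_0$; rejecting at a threshold between these two scales then yields a test with vanishing Type I and Type II errors.

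For the null bound, under $\cH_0$ the process $F_m - G_n$ is centered, and for each fixed $t$ Bernstein's inequality gives a sub-Gaussian tail for $\sqrt{mn/(m+n)}\,(F_m(t) - G_n(t))/\sqrt{H(t)(1-H(t))}$ as long as $(m+n)H(t)(1-H(t)) \gg \log n$. A union bound over the $O(m+n)$ jump points of $H_{m+n}$---together with a separate Eicker--Jaeschke-type argument at the extreme endpoints, where $H_{m+n}(1-H_{m+n})$ is of order $1/(m+n)$---yields $\hc = O_P(\sqrt{\log(m+n)}) = o(\log n)$ under $\cH_0$.

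For the alternative in case (a), a direct calculation gives $\mathbb{E}[F_m(t_n) - G_n(t_n)] = \eps[\bar F(t_n - \mu) - \bar F(t_n)]$ and $\operatorname{Var}(F_m(t_n) - G_n(t_n))\cdot mn/(m+n) = \bar H(t_n)(1-\bar H(t_n)) \asymp \bar F(t_n) + \eta\eps\bar F(t_n-\mu)$. Under \eqref{condHC1}, $n\bar H(t_n) \gg \log^2 n$, which places Bernstein squarely in its Gaussian regime: the deviation of the numerator from its mean is $O_P(\sqrt{\log n\cdot\bar H(t_n)(m+n)/(mn)})$, while $H_{m+n}(t_n)(1-H_{m+n}(t_n))$ concentrates multiplicatively around $H(t_n)(1-H(t_n))$. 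Consequently the value of the HC statistic at $t_n$ coincides with the left-hand side of \eqref{condHC2} up to a $(1+o_P(1))$ factor, and is thus $\gg \log n$. Case (b) is simpler: at the median of $F$ the denominator is bounded below by a positive constant, so Hoeffding's inequality alone shows that the HC value at this $t$ is $\asymp \sqrt{n}\,\eps[\bar F(t-\mu) - 1/2] \gg \log n$ by \eqref{condHC3}. The principal obstacle is controlling the null supremum near the endpoints of the pooled distribution, where the normalizer $\sqrt{H_{m+n}(1-H_{m+n})}$ is of order $1/\sqrt{m+n}$ and Bernstein leaves its Gaussian regime; this is also why the thresholds in the theorem carry a factor $\log n$ rather than $\sqrt{\log n}$, and why \eqref{condHC1} requires $\log^2 n$ rather than $\log n$.
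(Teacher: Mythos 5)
Your treatment of the alternative is essentially the paper's argument: lower-bound the supremum in \eqref{HC2sample} by its value at the single threshold $t_n$ (or the median), observe that $mF_m(t_n)$ and $nG_n(t_n)$ are binomial so the numerator concentrates around $\eps[\bar F(t_n-\mu)-\bar F(t_n)]$, use \eqref{condHC1} to get multiplicative concentration of $H_{m+n}(1-H_{m+n})$ around $H(1-H)\asymp \bar F(t_n)+\eps\eta\bar F(t_n-\mu)$, and conclude via \eqref{condHC2} (resp.\ \eqref{condHC3} with the denominator bounded below at the median). The paper does exactly this with Chebyshev in place of Bernstein, and your reading of the role of the $\log^2 n$ in \eqref{condHC1} (keeping the numerator's fluctuation $\log n\sqrt{\bar H(t_n)/n}$ and the denominator's relative error under control) is the right one -- it has nothing to do with the endpoints of the null supremum, contrary to your closing remark.

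Where you genuinely diverge is the null calibration, and that is also where your sketch falls short of a proof. The paper does not attempt a union bound: it invokes Finner and Gontscharuk's result that the two-sample statistic \eqref{HC2sample} equals a.s.\ a normalized rank statistic whose null distribution coincides asymptotically with the one-sample higher criticism of Jaeschke, so that $\hc = O_P(\sqrt{\log\log n})$ under $\cH_0$ and the threshold $\log n$ gives vanishing Type I error for free. Your alternative route -- Bernstein at each of the $O(m+n)$ jump points plus ``a separate Eicker--Jaeschke-type argument at the extreme endpoints'' -- is precisely the hard part of that cited theorem, and you have asserted it rather than proved it. In the transition region where $(m+n)H_{m+n}(1-H_{m+n})$ is of order a constant or $\log n$, the standardized increments have Poisson-type rather than sub-Gaussian tails, and a naive union bound only yields $O_P(\log n/\log\log n)$ there, which is not $o(\log n)$ by a comfortable margin and in particular does not give your claimed $O_P(\sqrt{\log(m+n)})$. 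Since the whole test is calibrated at the threshold $\log n$, this is the one step you must either carry out carefully or replace by the citation the paper uses; everything else in your proposal is sound and matches the paper.
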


\begin{proof} 
Finner and Gontscharuk \cite{finner2018two} showed that the two-sample HC statistic \eqref{HC2sample} is almost surely equal to 
\beq 
\hc^* = \sup_{s \in I_{m,n}} \sqrt{\frac{m+n}{m+n-1}} \frac{[V_{m,s}-\E_0[V_{m,s}]]}{\sqrt{\Var_0[V_{m,s}]}},
\eeq 
where $V_{m,s}$ denotes the number of ranks related to the $X$-sample being not larger than $s$ and $I_{m,n} := \{1, \cdots, m+n-1\}$. They also showed that $\hc^*$ coincides asymptotically in distribution with the one-sample HC statistic with sample size $n$ under the null as we assume that $n \le m$, which was derived in \cite{jaeschke1979asymptotic}. Thus, we have 
\beq
\P_0\big(\hc \ge \sqrt{3 \log \log n}\big) \to 0.
\eeq
For simplicity, we consider the test with rejection region $\{\hc \ge \log n\}$.
Hence, the test is asymptotically powerful if, under the alternative, there is $t_n$ (or $t$) $\in \mathbb{R}$ such that
\beq
\P_1 \Big (\sqrt{\frac{mn}{m+n}} \frac{[F_m(t_n) - G_n(t_n)]}{\sqrt{H_{m+n}(t_n)(1-H_{m+n}(t_n))}} \ge \log n \Big) \to 1.
\eeq
Indeed, $mF_m(t)$ is binomial with parameters $m$ and $F(t)$, $nG_n(t)$ is binomial with parameters $n$ and $G(t) = (1-\eps)F(t) + \eps F(t - \mu)$, and 
\beq 
H_{m+n}(t) = \frac{m}{m+n}F_m(t) + \frac{n}{m+n} G_n(t).
\eeq
We also define $H(\cdot)$ as
\beq 
H(t) =  (1-\eta)F(t) + \eta G(t) =  (1- \eps \eta)F(t) + \eps \eta F(t-\mu).
\eeq
Hence, by Chebyshev's inequality, we have
\beq 
\frac{\sqrt{m} |F_m(t_n) - F(t_n)|}{\sqrt{F(t_n)(1-F(t_n))}} \le \log m
\eeq
with probability tending to 1, and 
\beq 
\frac{\sqrt{n} |G_n(t_n) - G(t_n)|}{\sqrt{G(t_n)(1-G(t_n))}} \le \log n
\eeq
with probability tending to 1. By triangular inequality, we have
\begin{align}
    |F(t_n) - G(t_n)| &= |F(t_n) - F_m(t_n) + F_m(t_n)- G_n(t_n) +G_n(t_n) - G(t_n)|\\
    &\le |F(t_n) - F_m(t_n)| + |F_m(t_n)- G_n(t_n)| + |G_n(t_n) -  G(t_n)| \\
    & \le |F_m(t_n)- G_n(t_n)| + \log m \sqrt{\frac{F(t_n)(1-F(t_n))}{m}} + \log n \sqrt{\frac{G(t_n)(1-G(t_n))}{n}}
\end{align}
Hence, we have
\begin{align}
    \sqrt{\frac{mn}{m+n}} \frac{[F_m(t_n) - G_n(t_n)]}{\sqrt{H_{m+n}(t_n)(1-H_{m+n}(t_n))}} \ge a_n - b_n,
\end{align}
where
\begin{align}
a_n &:= \sqrt{\frac{mn}{m+n}} \frac{ \eps [\bar F(t_n - \mu) - \bar F(t_n)]}{\sqrt{H_{m+n}(t_n)(1-H_{m+n}(t_n))}}, \\
b_n &:= \sqrt{\frac{n}{m+n}} \log m \sqrt{\frac{F(t_n)(1-F(t_n))}{H_{m+n}(t_n)(1-H_{m+n}(t_n))}} + \sqrt{\frac{m}{m+n}} \log n \sqrt{\frac{G(t_n)(1-G(t_n))}{H_{m+n}(t_n)(1-H_{m+n}(t_n))}},
\end{align}
as we know that $F>G$ under the alternative, and it suffices to show that 
\beq \label{condHC}
a_n \ge b_n + \log n
\eeq
with probability tending to 1.

\begin{align}
    |\bar H_{m+n}(t_n) - \bar H(t_n)| &= \big |\frac{m}{m+n} \bar F_m(t_n) + \frac{n}{m+n} \bar G_n(t_n) - (1-\eta) \bar F(t_n) - \eta \bar G(t_n) \big| \\
    &= \big |\frac{m}{m+n}(\bar F_m(t_n)-\bar F(t_n)) + (\frac{m}{m+n}- (1-\eta))\bar F(t_n) \nonumber \\
    & \quad + \frac{n}{m+n} (\bar G_n(t_n)-\bar G(t_n))  + (\frac{n}{m+n} - \eta )\bar G(t_n) \big|\\
    & \le \big |\frac{m}{m+n}(\bar F_m(t_n)- \bar F(t_n))| + |\frac{n}{m+n} (\bar G_n(t_n)-\bar G(t_n))\big | \nonumber \\
    & \quad + O \Big (\frac{\log n}{\sqrt{n}} \Big) (\bar F(t_n) + \bar G(t_n)) \\
    & \le \frac{m}{m+n}\log m \sqrt{\frac{\bar F(t_n)(1- \bar F(t_n))}{m}} + \frac{n}{m+n} \log n \sqrt{\frac{\bar G(t_n)(1- \bar G(t_n))}{n}} \nonumber\\
    & \quad + O \Big (\frac{\log n}{\sqrt{n}} \Big) (\bar F(t_n) + \bar G(t_n)) \\
    & \asymp O \Big(  \frac{\log m}{\sqrt{m}} \sqrt{\bar F(t_n)}  \Big) +  O \Big(\frac{\log n}{\sqrt{n}} \sqrt{\bar G(t_n)}\Big) +  O \Big(  \frac{\log n}{\sqrt{n}}  (\bar F(t_n) + \bar G(t_n)) \Big) \\
    & \asymp O \Big ( \frac{\log n}{\sqrt{n}} \sqrt{(1- \eps)\bar F(t_n) + \eps \bar F(t_n-\mu)}  \Big),
\end{align}
as we assume that 
\beq 
|\frac{n}{m+n} - \eta| = O (\log n / \sqrt{n}).
\eeq 
Thus, under \eqref{condHC1} or \eqref{condHC3}, we have
\beq 
|\bar H_{m+n}(t_n) - \bar H(t_n)| \ll  \bar H(t_n).
\eeq 
Then if $t_n \to \infty$, we have $H(t_n) \to 1$, and
\begin{align}
    a_n &= \sqrt{\frac{mn}{m+n}} \frac{ \eps [ \bar F(t_n - \mu) - \bar F(t_n)]}{\sqrt{H_{m+n}(t_n)(1-H_{m+n}(t_n))}} \\
    & \asymp \frac{ \sqrt{n (1-\eta)} \eps [\bar F(t_n - \mu) - \bar F(t_n)]}{\sqrt{(1- \eps \eta)\bar F(t_n) + \eps \eta \bar F(t_n-\mu)}} \cdot \sqrt{\frac{\bar H(t_n)}{\bar H_{m+n}(t_n)}} \\
    & \asymp \frac{ \sqrt{n} \eps [\bar F(t_n - \mu) - \bar F(t_n)]}{\sqrt{\bar F(t_n) + \eps \eta \bar F(t_n-\mu)}} \gg \log n,
\end{align}
under \eqref{condHC2}. 

If $t$ is the median of $F$ such that $F(t) = 1/2$, then $H(t)$ is bounded away from 0 and 1, and under condition \eqref{condHC3},
\begin{align}
a_n &= \sqrt{\frac{mn}{m+n}} \frac{ \eps [\bar F(t - \mu) - \bar F(t)]}{\sqrt{H_{m+n}(t)(1-H_{m+n}(t))}} \\
& = \sqrt{n} \eps [\bar F(t - \mu) - \frac{1}{2}] \gg \log n.
\end{align}
In addition, we have
\beq 
b_n  \asymp \log n.
\eeq 
Therefore, \eqref{condHC} is fulfilled eventually.
\end{proof}

In the generalized Gaussian mixture model, with parameterization \eqref{eps2sample} and \eqref{mu}, we choose $t_n = (\gamma q \log n)^{1/\gamma}$, $r<q\le1$ fixed. By the tail behavior of $F$, we have
\beq 
\bar F(t_n) = L_n n^{-q}, \quad \bar F(t_n - \mu_n) = L_n n^{-(q^{1/\gamma}-r^{1/\gamma})^\gamma},
\eeq 
where $L_n$ denotes any factor logarithmic in $n$.

If $\gamma >1$, we define $r_\gamma = (1 - 2 ^{-1/(\gamma-1)})^\gamma$. If $r < r_\gamma$, we set $q=r/r_\gamma$. Then the LHS in \eqref{condHC1} is
\beq 
n(L_n n^{-r/r_\gamma} \vee \eps L_n n^{-r(r_\gamma^{-1/\gamma} - 1)^\gamma}) \asymp L_n ( n^{1-r/r_\gamma} \vee n^{1-\beta-r(r_\gamma^{-1/\gamma} - 1)^\gamma}) \gg \log^2 n.
\eeq
The LHS in \eqref{condHC2} is 
\beq 
L_n \frac{n^{\frac{1}{2}-\beta}(n^{-r(r_\gamma^{-1/\gamma} - 1)^\gamma} - n^{-r/r_\gamma})}{\sqrt{n^{-r/r_\gamma} + \eps \eta n^{-r(r_\gamma^{-1/\gamma} - 1)^\gamma}}} 
\asymp L_n (n^{\frac{1 + r/r_\gamma}{2}- \beta -r(r_\gamma^{-1/\gamma} - 1)^\gamma } \wedge n^{\frac{1}{2}(1- \beta-r(r_\gamma^{-1/\gamma} - 1)^\gamma )}),
\eeq  
where both exponents are positive when $r > (2^{1/(\gamma-1)}-1)^{\gamma-1}(\beta - \frac{1}{2})$.

If $r \ge r_\gamma$, we set $q=1$. Then the LHS in \eqref{condHC1} is
\beq 
n(L_n n^{-1} \vee \eps L_n n^{-(1-r^{1/\gamma})^\gamma}) 
\asymp L_n(1 \vee n^{1-\beta-(1-r^{1/\gamma})^\gamma}) \gg \log^2 n,
\eeq
if $1-\beta-(1-r^{1/\gamma})^\gamma > 0$. And the LHS in \eqref{condHC2} is 
\beq 
L_n \frac{n^{\frac{1}{2}-\beta}(n^{-(1-r^{1/\gamma})^\gamma} - n^{-1})}{\sqrt{n^{-1} + \eps \eta n^{-(1-r^{1/\gamma})^\gamma}}} \asymp L_n(n^{1-\beta -(1-r^{1/\gamma})^\gamma} \wedge n^{\frac{1}{2}(1-\beta -(1-r^{1/\gamma})^\gamma)}),
\eeq
where both exponents are positive when if $1-\beta-(1-r^{1/\gamma})^\gamma > 0$.

If $\gamma \le 1$, we set $q = r$, so that $t_n = \mu_n$. Then the LHS in \eqref{condHC1} is
\beq
n (L_n n ^{-r} \vee L_n) \gg \log^2 n,
\eeq 
and the LHS in \eqref{condHC2} is
\beq
L_n \frac{n^{\frac12 - \beta}(1 - n^{-r})}{\sqrt{n^{-r} + n^{-\beta}}} \asymp L_n n^{\frac12 - \beta + \frac{r}{2}},
\eeq 
where the exponent is positive when  $r > 2 \beta -1$. Comparing with the detection boundary, we see that the two-sample higher criticism test achieves the detection boundary in the generalized Gaussian model in the sparse regimes for any $\gamma > 0$.

In the dense regime, with parameterization \eqref{eps2sample} and \eqref{mu2}, $t = F^{-1}(1/2) = 0$, hence, 
\beq
\sqrt{n} \eps [\bar F(- \mu) - \frac{1}{2}] \asymp \sqrt{n} \eps \mu = n^{s-\beta},
\eeq
and the exponent is positive when $s<\beta$. So that the two-sample HC test achieves the detection boundary when $\gamma\ge 1/2$.

\section{Other tests}
It is well-known that in the more classical setting, the two-sample situation is closely related to the one-sample tests for symmetry. Essentially, the main two-sample tests have the same relatively efficiency between them as the corresponding one-sample tests. We analyzed some classical tests in this section.

\subsection{The Wilcoxon test}
The Wilcoxon test is the classical nonparametric test for location shift between two samples \cite{wilcoxon1945, mann1947test}. In particular, in this case, it rejects for large values of the Wilcoxon statistic $U$ which counts the number of pairs $X_i$, $Y_j$ with $X_i < Y_j$.

\begin{prp}
For the testing problem \eqref{problem2sample} and under \eqref{mn}, the Wilcoxon test is asymptotically powerful  (resp. powerless) when
\beq \label{wilcoxon}
\sqrt{n} \eps \big[ \frac{1}{2} -  \int F(\cdot - \mu) d F \big] \gg \log n \quad (\textit{reps.} \to 0).
\eeq 
\end{prp}

\begin{proof}
Mann and Whitney \cite{mann1947test} proved that under the null $F=G$, for large samples,
\beq 
\frac{\frac{U}{mn} - \E_0(\frac{U}{mn})}{\sigma_0(\frac{U}{mn})}
\eeq 
is approximately normally distributed. In particular, the first two moments of $U$ are \cite{mann1947test, lehmann1953power}
\beq 
\E \Big(\frac{U}{mn} \Big) = \int F d G,
\eeq 
\beq
mn\Var \Big(\frac{U}{mn}\Big) = \Big [\frac{m+n+1}{12} + (m-1)(\lambda - \eps_1) + (n-1)(\lambda - \eps_2) - \lambda^2 (m+n-1)\Big],
\eeq
where
\beq 
\lambda = \frac{1}{2} - \int F d G, \quad \eps_1 = \frac{1}{3} - \int F^2 dG, \quad \eps_2 = \frac{1}{3} - \int (1-G)^2 dF.
\eeq 
Hence, we have 
\beq 
\E_0 \Big(\frac{U}{mn} \Big) = \int F d F = \frac{1}{2},
\eeq 
\beq 
\Var_0 \Big(\frac{U}{mn} \Big) = \frac{m+n+1}{12mn}.
\eeq 
By Chebyshev's inequality, we have
\beq
\P_0 \Big(|U - \frac{mn}{2}| \ge a_n \sqrt{\frac{(m+n+1)mn}{12}} \Big) \to 0,
\eeq 
for any sequence $a_n$ diverging to infinity. Under $\cH_1$, $G = (1-\eps)F + \eps F(\cdot - \mu)$, we have
\beq 
\E_1 \Big(\frac{U}{mn} \Big) = \int F d G = \frac{1}{2} + \frac{\eps}{2} - \eps \int F(\cdot - \mu) d F,
\eeq 
and 
\beq 
\Var_1 \Big(\frac{U}{mn} \Big) = O \Big (\frac{m+n+1}{12mn} \Big),
\eeq 
as $0 \le \int F^k d G \le 1$, $k = 1,2$ and $0 \le \int (1-G)^2 dF \le 1$. Then by Chebyshev's inequality, we have
\beq
\P_1 \Big(|U - mn(\frac{1}{2} + \frac{\eps}{2} - \eps \int F(\cdot - \mu) d F)| \ge a_n \sqrt{\frac{(m+n+1)mn}{12}} \Big) \to 0,
\eeq 
for any sequence $a_n$ diverging to infinity. We choose $a_n = \log n$ and consider the test with rejection region $\{U - \frac{mn}{2} \ge \log n \sqrt{\frac{(m+n+1)mn}{12}}\}$.
The test is asymptotically powerful when, eventually, 
\beq
\eps \big [\frac{1}{2} -  \int F(\cdot - \mu) d F \big] \ge \log n \sqrt{\frac{m+n+1}{12mn}},
\eeq 
which is satisfied under \eqref{mn} and \eqref{wilcoxon}.

Next we show that the Wilcoxon test is asymptotically powerless when \eqref{wilcoxon} converges to zero. Lehmann \cite{lehmann1951consistency} showed that asymptotic normality still holds for $U$ under the alternative and \eqref{mn}, that is
\beq 
\frac{\frac{U}{mn} - \E_1(\frac{U}{mn})}{\sigma_1(\frac{U}{mn})} \to \cN(0,1).
\eeq 
Hence, under $\cH_1$, we have
\beq 
\frac{U - \E_0(U)}{\sigma_0(U)} = 
\Big( \frac{U - \E_1(U)}{\sigma_1(U)}  + \frac{\E_1(U) - \E_0(U)}{\sigma_1(U)}  \Big ) \cdot \frac{\sigma_1(U)}{\sigma_0(U)},
\eeq 
where $\frac{\E_1(U) - \E_0(U)}{\sigma_1(U)} \asymp \sqrt{n} \eps [ \frac{1}{2} -  \int F(\cdot - \mu) d F] \to 0$ and $\sigma_1(U)/\sigma_0(U) \asymp 1$. Therefore, by Slutsky's theorem, $(U - \E_0(U))/\sigma_0(U)$ also converges to $\cN(0,1)$ as under the null. No test based on $U$ would have any power.
\end{proof}
Note that the Wilcoxon test is asymptotically powerless when $\sqrt{n}\eps_n \to 0$. In the generalized Gaussian mixture model, in the dense regime with parameterization \eqref{eps2sample} and \eqref{mu2}, we have
\beq
\sqrt{n} \eps_n [ \frac{1}{2} -  \int F(\cdot - \mu_n) d F] \approx \sqrt{n} \eps_n [ \frac{1}{2} -  \int (F - \mu_n f) d F]  = 
\sqrt{n} \eps_n \mu_n \int f d F \asymp n^{s-\beta},
\eeq 
where $f$ is the density function. Hence, the Wilcoxon test is asymptotically powerful when $s > \beta$, and it achieves the detection boundary when $\gamma > 1/2$.

\subsection{The two-sample Kolmogorov-Smirnov test}
The two-sample Kolmogorov-Smirnov test \cite{smirnov1939estimation} rejects for larges values of 
\beq 
D_{m,n} = \sup_{t \in \mathbb{R}} [F_m(t) - G_n(t)].
\eeq 

\begin{prp}
	For the testing problem \eqref{problem2sample} and under \eqref{mn}, the two-sample Kolmogorov-Smirnov test is asymptotically powerful (resp. powerless) when 
	\beq \label{smirnov}
	\sqrt{n} \eps \sup_{t \in \mathbb{R}}  [\bar F(t-\mu) -\bar F(t)] \to \infty \quad (\textit{resp.} \to 0).
	\eeq 
\end{prp}

\begin{proof}
We already know the limiting distribution of $\sqrt{mn/(m+n)} D_{m,n}$  under the null hypothesis \cite{smirnov1939estimation}. Under $\cH_1$, by triangle inequality, 
\begin{align}
\sqrt{n} \sup_{t \in \mathbb{R}} [F_m(t) - G_n(t)] 
&\ge \sqrt{n} \sup_{t \in \mathbb{R}} [F(t) - G(t)] - \sqrt{n} \sup_{t \in \mathbb{R}} |F_m(t) - F(t)| - \sqrt{n} \sup_{t \in \mathbb{R}} |G_n(t) - G(t)|\\
& = \sqrt{n} \eps \sup_{t \in \mathbb{R}} [F(t) - F(t-\mu)] - O_p(1) \to \infty,
\end{align}
when the limit in \eqref{smirnov} is infinity.	

When the  limit in \eqref{smirnov} is 0, let $I_0$ and $I_1$ index the observations in the $Y$- sample coming from the null and contaminated components, respectively. Let $G_n^j(t) = \frac{1}{|I_j|} \sum_{i \in I_j}\IND{y_i \le t}$, $j = 0,1$. We have
\beq 
G_n(t) = \frac{|I_0|}{n} G_n^0(t) + \frac{|I_1|}{n} G_n^1(t).
\eeq 
By triangle inequality, 
\begin{align}
& |\sqrt{n} \sup_{t \in \mathbb{R}} [F_m(t) - G_n(t)] - \sqrt{|I_0|} \sup_{t \in \mathbb{R}} [F_m(t) - G_n^0(t)]|\\ 
& \le \big |\sqrt{\frac{|I_0|}{n}}-1 \big| \big|\sqrt{|I_0|} \sup_{t \in \mathbb{R}} [F_m(t) - G_n^0(t)]\big| + \sqrt{\frac{|I_1|}{n}} \big|\sqrt{|I_1|} \sup_{t \in \mathbb{R}} [F_m(t) - G_n^1(t)]\big|\\
& \le \big |\sqrt{\frac{|I_0|}{n}}-1 \big|  O_p(1) + \sqrt{\frac{|I_1|}{n}} \big|\sqrt{|I_1|}  \sup_{t \in \mathbb{R}} [F(t) - F(t-\mu)] + O_p(1) \big| = o_p(1),
\end{align}
by the fact that $|I_0| \sim_p n$, $|I_1| \sim_p n\eps$ and \eqref{smirnov}  converges to 0. Hence, $\sqrt{n} D_{m,n} \sim \sqrt{|I_0|} D_{m,|I_0|}$ under $\cH_1$, which has the same limiting distribution as under $\cH_0$.
\end{proof}

Note that the two-sample Kolmogorov-Smirnov test has no power in the sparse regime. In the generalized Gaussian mixture model, in the dense regime with parameterization \eqref{eps2sample} and \eqref{mu2}, we have
\beq
\sqrt{n} \eps \sup_{t \in \mathbb{R}}  [\bar F(t-\mu) -\bar F(t)] \ge \sqrt{n} \eps  [\bar F(-\mu) -\bar F(0)] \asymp n^{s-\beta} \to \infty,
\eeq
when $s > \beta$. Same as the Wilcoxon test, it only achieves the detection boundary with $\gamma > 1/2$.

\subsection{The tail-run test}
We now consider the tail-run test. Let $\zeta_{(j)} = 0$ or 1, according to whether the $j$th largest observation is from the $X$-sample or the $Y$-sample, $j = 1, \cdots, m+n$. Then the tail-run test rejects for large values of 
\beq \label{tail}
L ^ * = \max \{l \ge 0: \zeta_{(1)} = \cdots = \zeta_{(l)} = 1 \}.
\eeq
The one-sample tail-run test for sparse mixtures is investigated in \cite{AriasCastro:2016is}. It is also analogous to the extreme tests in the normal mixture model.

\begin{prp}
	For the testing problem \eqref{problem2sample} and under \eqref{mn}, and let $(l_n)$ be a divergent sequence of positive integers. The tail-run test is asymptotically powerful when there exits a sequence $(t_n)$ such that 
	\beq \label{tailcond}
	m\bar F(t_n) \to 0, \quad n\eps \bar F(t_n -\mu) \ge 2 l_n.
	\eeq 
\end{prp}

\begin{proof}
	We consider the tail-run test with rejection region $\{L^* \ge l_n\}$. Note that $L^*$ is the number of the $Y$ - samples until the first $X$-sample is encountered. Under $\cH_0$, $L^*$ is following negative hypergeometric distribution with the population size $m+n$, and we have
	\beq 
	\E_0(L^*) = \frac{n}{m+1}, \quad \Var_0(L^*)  = \frac{(m+n+1)n}{(m+1)(m+2)} [1 - \frac{1}{m+1}].
	\eeq 
	Hence, $L^* = O_p(1)$ and $l_n \to \infty$, we have $\P_0(L^* \ge l_n) \to 0$ as $n \to \infty$.
	
	Under $\cH_1$, note that 
	\beq
	\P_X(\max_i X_i \le t_n) = (1 - \bar F(t_n))^m \to 1, 
	\eeq 
	under the condition $m\bar F(x_n) \to 0$. Therefore, $L^* \ge N := \#\{j, Y_j > t_n\}$ with high probability. And $N \sim \Bin(n,p_y)$ where $p_y = (1-\eps)\bar F(t_n) + \eps \bar F(t_n -\mu)$. Eventually, under \eqref{tailcond} , we have $N = (1+o_p(1)) n p_y \ge l_n$.
\end{proof}
In the generalized Gaussian mixture model, with parameterization \eqref{eps2sample} and \eqref{mu}, we choose $t_n = (\gamma(1+q) \log n)^{1/\gamma}$, $q>0$ fixed. By the tail behavior of $F$, we have
\beq 
m \bar F(t_n) = L_n n^{-q}, \quad n \eps \bar F(t_n - \mu_n) = L_n n^{1-\beta-((1+q)^{1/\gamma}-r^{1/\gamma})^\gamma},
\eeq 
where $L_n$ denotes any factor logarithmic in $n$. When $r > (1 - (1-\beta)^{1/\gamma})^\gamma$ is fixed, we can choose $q>0$ small enough that $1-\beta-((1+q)^{1/\gamma}-r^{1/\gamma})^\gamma> 0$. Hence, the tail-run test is suboptimal in the moderately sparse regime and is optimal in the very sparse regime. 

\section{Numerical experiments}
We performed some numerical experiments to investigate the finite sample performance of the likelihood ratio test (LRT), the two-sample higher criticism (HC) test, the Wilcoxon test, the two-sample Kolmogorov-Smirnov (KS) test and the tail-run test.  We set sample sizes $m = n = 10^5$ in order to capture the large-sample behavior of these tests. The p-values for each test are calibrated as follows:

\renewcommand{\theenumi}{(\alph{enumi})}
\renewcommand{\labelenumi}{\theenumi}
\benum  \setlength{\itemsep}{0in}
\item For the {\em likelihood ratio test} and the {\em two-sample higher criticism test}, we simulated the null distribution based on $4,000$ Monte Carlo replicates. 
\item For the {\em Wilcoxon test} and the {\em two-sample Kolmogorov-Smirnov test}, the p-values are from the limiting distributions.
\item For the {\em tail-run test}, we used the exact null distribution, that is the negative hypergeometric distribution.
\eenum

For each scenario, we repeated the whole process 200 times and recorded the fraction of p-values  smaller than 0.05, representing the empirical power at the 0.05 level. 

\subsection*{Normal mixture model}
In this model,  $F$ is standard normal.  The results are reported in \figref{numerics41} and are largely congruent with the theory developed earlier.  

{\em Dense regime.} We set $\beta = 0.2$ and $\mu_n = n^{s-1/2}$ with $s$ ranging from 0.05 to 0.5 with increments of 0.05. The two-sample HC test, the Wilcoxon test and the two-sample KS test perform comparable to the LRT, while the tail-run test is obviously suboptimal.

{\em Moderately sparse regime.} We set $\beta = 0.6$ and $\mu_n = \sqrt{2r\log n}$ with $r$ ranging from 0.05 to 0.5 with increments of 0.05. The two-sample HC performs slightly worse than the LRT but better than the tail-run test, while the Wilcoxon test and the KS test are powerless. 

{\em Very sparse regime.} We set $\beta = 0.8$ and $\mu_n = \sqrt{2r\log n}$ with $r$ ranging from 0.1 to 0.9 with increments of 0.1. Though our theory show that both the two-sample HC test and the tail-run test achieve the detection boundary, they both perform significantly below the LRT. The tail-run test is more powerful than the two-sample HC test, which is consistent with the observation in the one-sample setting \cite{AriasCastro:2018wr}. 

\begin{figure}[ht!]
	\centering	
	\subfigure[$\beta=0.2$]{
		\includegraphics[width=0.45\textwidth]{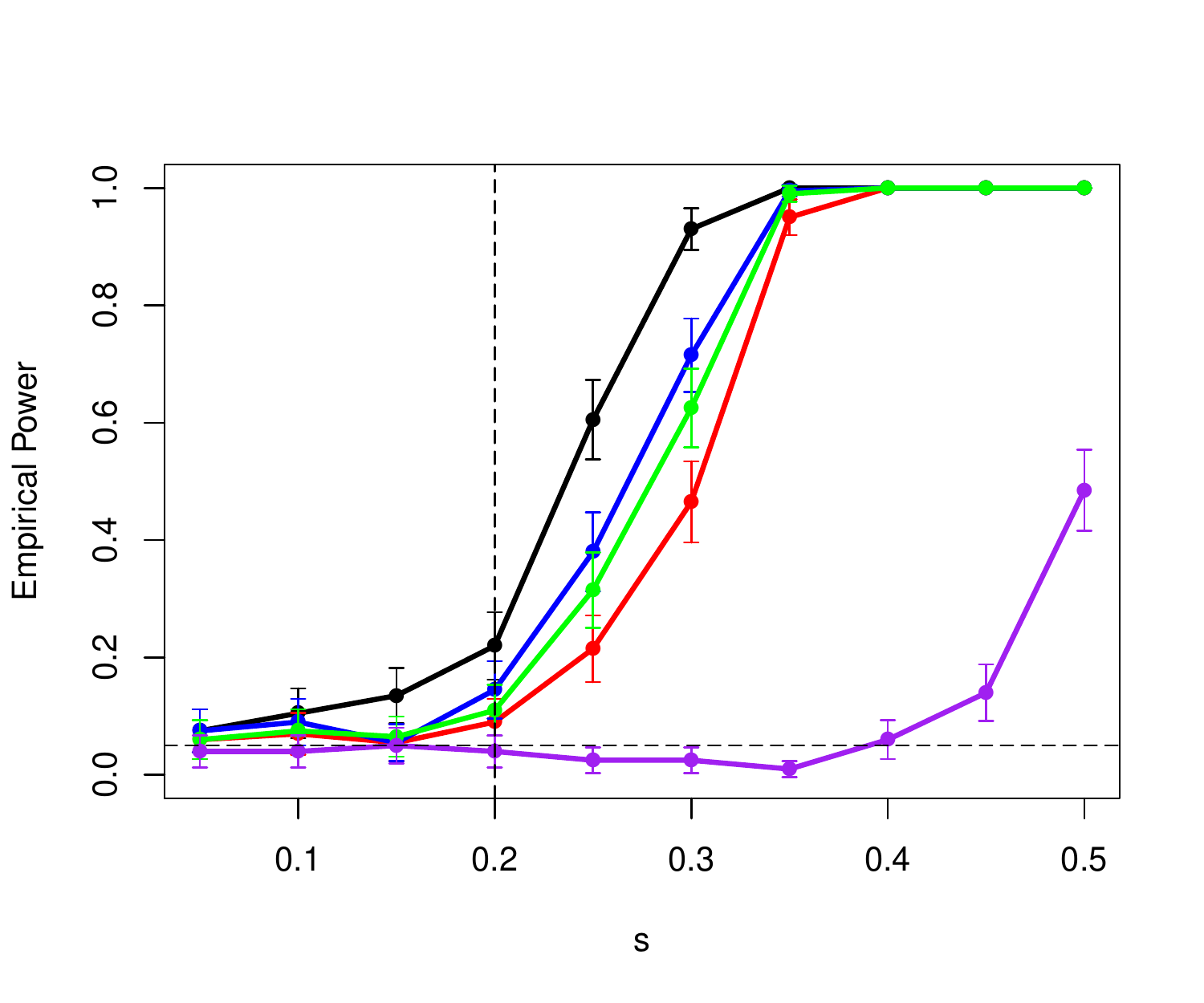}
		\label{fig:subfig41}
	}	
	\subfigure[$\beta=0.6$]{
		\includegraphics[width=0.45\textwidth]{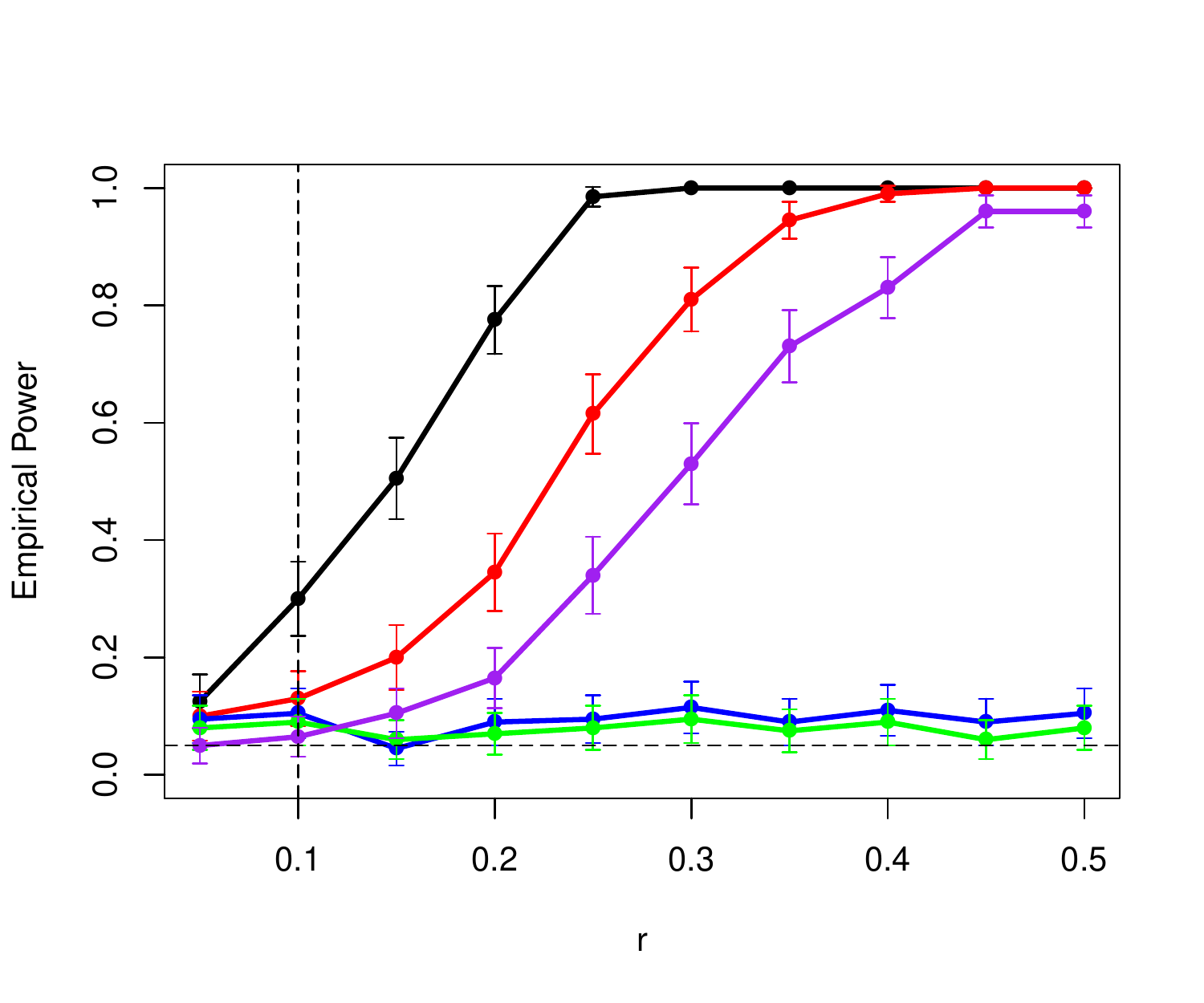}
		\label{fig:subfig42}
	}
	\subfigure[$\beta=0.8$]{
		\includegraphics[width=0.45\textwidth]{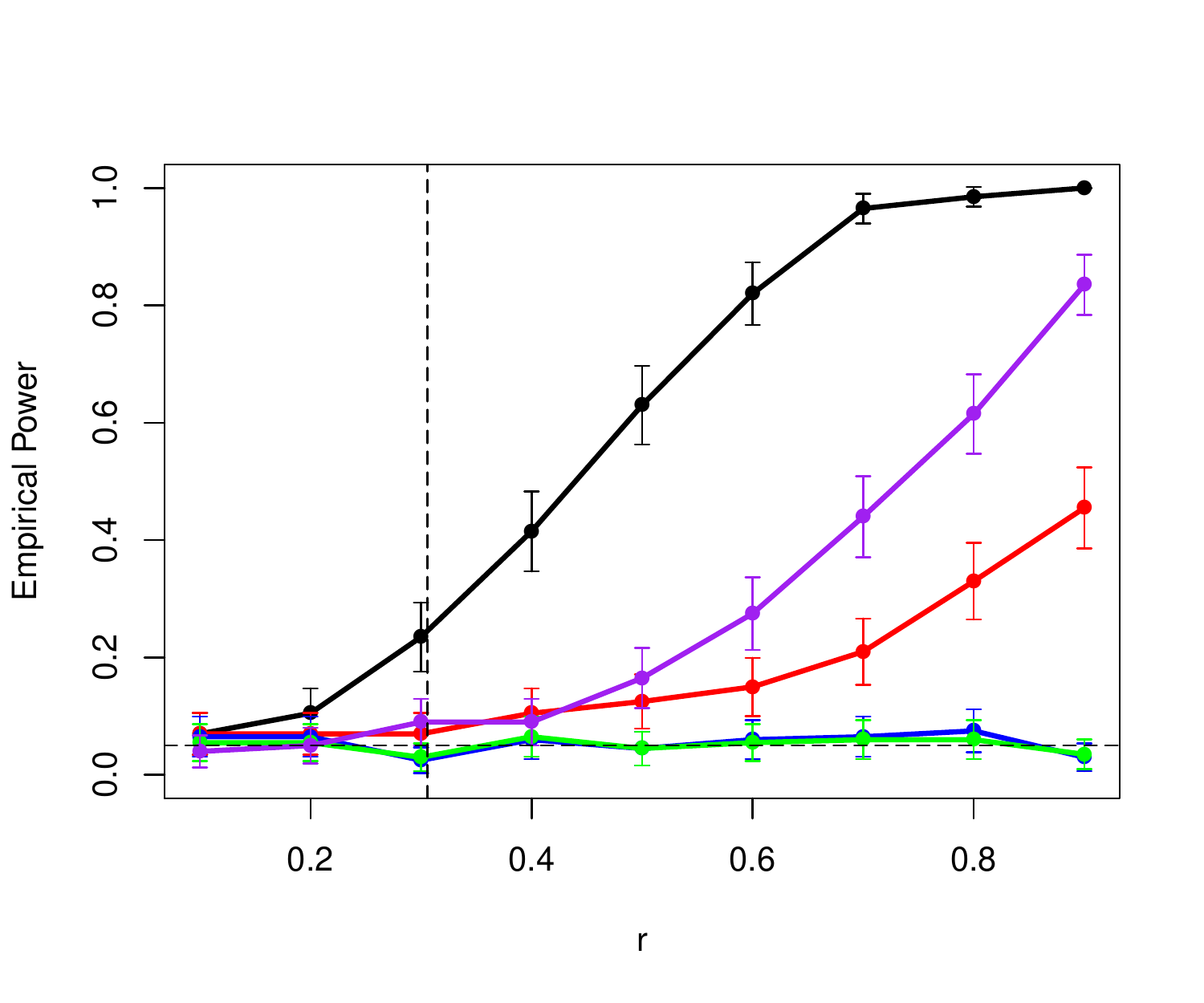}
		\label{fig:subfig43}
	}
	\caption[Empirical power comparison with 95\% error bars in the normal mixture model.] {Empirical power comparison with 95\% error bars for the likelihood ratio test (black), the two-sample higher criticism test (red), the Wilcoxon test (blue), the two-sample Kolmogorov-Smirnov test (green) and the tail-run test (purple). \subref{fig:subfig41}  Dense regime where $\beta = 0.2$. \subref{fig:subfig42} Moderately sparse regime where $\beta = 0.6$. \subref{fig:subfig43} Very sparse regime where $\beta = 0.8$.  The horizontal line marks the level (set at 0.05) and the vertical line marks the asymptotic detection boundary derived earlier. The sample size is $m = n =10^5$ and the power curves and error bars are based on 200 replications.}
	\label{fig:numerics41}
\end{figure}

\subsection*{Double-exponential mixture model}
In this model, $F$ is double-exponential with variance 1. The simulation results are reported in \figref{numerics42}. The results are largely congruent with our theory.

\begin{figure}[ht!]
	\centering	
	\subfigure[$\beta=0.2$]{
		\includegraphics[width=0.45\textwidth]{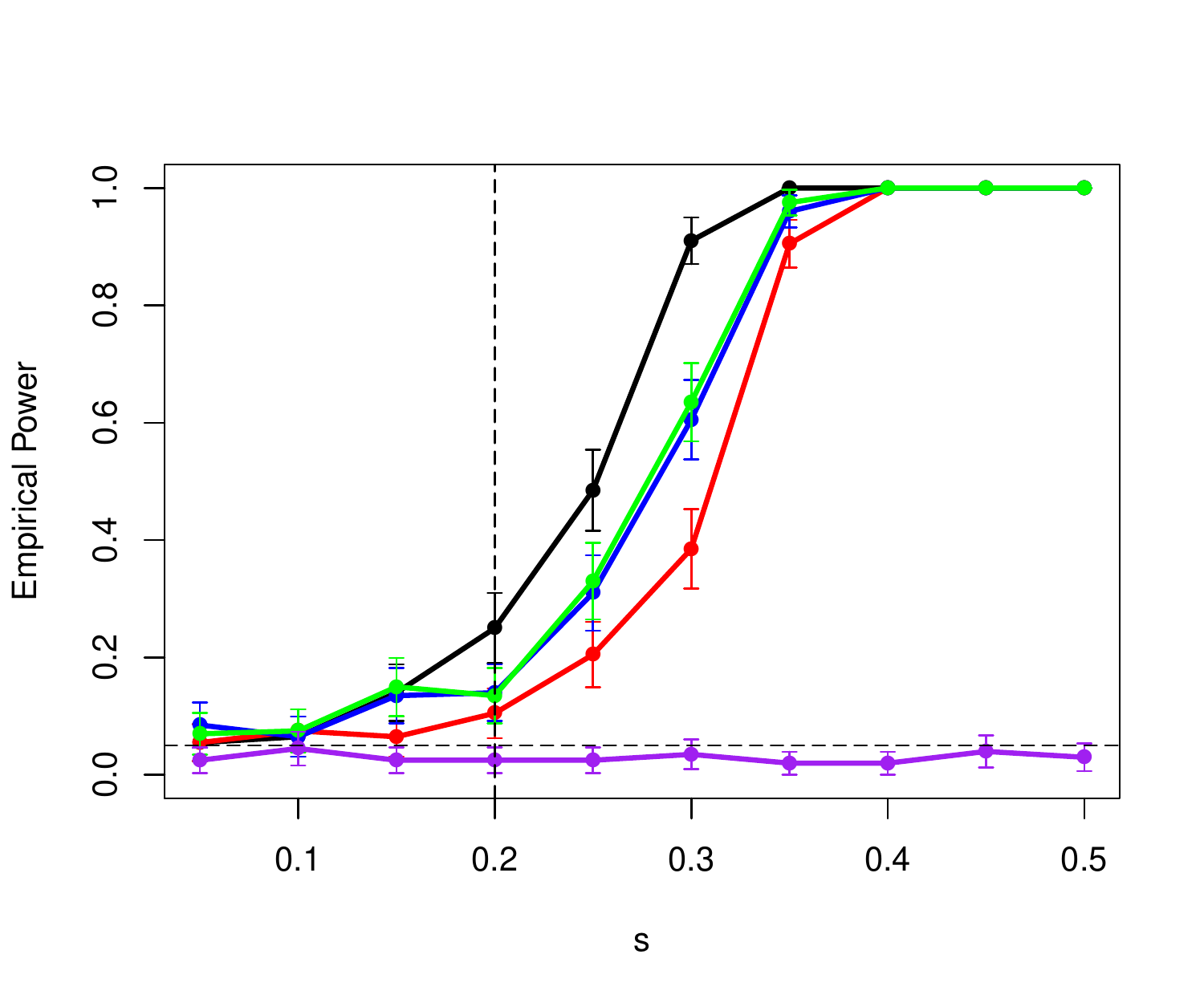}
		\label{fig:subfig45}
	}	
	\subfigure[$\beta=0.6$]{
		\includegraphics[width=0.45\textwidth]{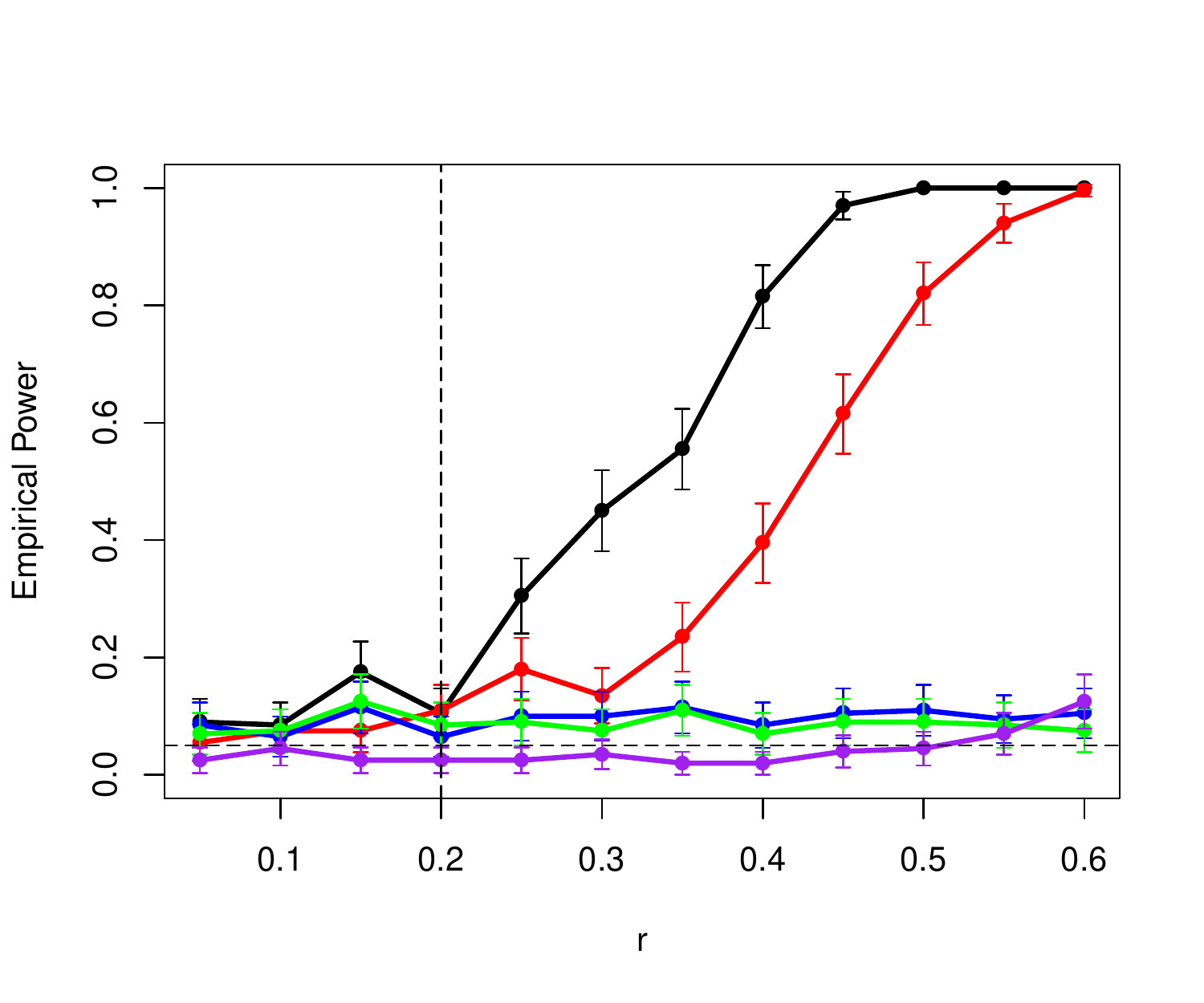}
		\label{fig:subfig46}
	}
	\caption[Empirical power comparison with 95\% error bars in the double-exponential model.] {Empirical power comparison with 95\% error bars for the likelihood ratio test (black), the two-sample higher criticism test (red), the Wilcoxon test (blue), the two-sample Kolmogorov-Smirnov test (green) and the tail-run test (purple). \subref{fig:subfig41}  Dense regime where $\beta = 0.2$. \subref{fig:subfig42} Moderately sparse regime where $\beta = 0.6$. \subref{fig:subfig43} Very sparse regime where $\beta = 0.8$.  The horizontal line marks the level (set at 0.05) and the vertical line marks the asymptotic detection boundary derived earlier. The sample size is $m = n =10^5$ and the power curves and error bars are based on 200 replications.}
	\label{fig:numerics42}
\end{figure}

\bibliographystyle{abbrvnat}
\bibliography{ref-variance}

\end{document}